\documentclass{article}

\usepackage{microtype}
\usepackage{graphicx}
\usepackage{booktabs}
\usepackage{hyperref}

\usepackage[accepted]{icml2026}

\usepackage[utf8]{inputenc}
\usepackage[T1]{fontenc}
\usepackage{amsmath,amssymb,amsthm,bbm,paralist,comment,mathtools,accents,xcolor,tikz,amsfonts,nicefrac,url}

\newcommand{\expect}[1]{\mathbb{E}\left[ #1 \right]}

\newcommand{\prob}[1]{\mathbb{P}\left( #1 \right)} 
\newcommand{\cprob}[2]{\mathbb{P}\left( #1 \middle\vert #2 \right)}

\newcommand{\norm}[1]{\left\Vert #1 \right\Vert}
\newcommand{\oo}[1]{\left( #1 \right)}

\newcommand{\cc}[1]{\left[ #1 \right]}

\newtheorem{theorem}{Theorem}
\newtheorem{lemma}{Lemma}

\newtheorem{definition}{Definition}
\newtheorem{corollary}{Corollary}

\icmltitlerunning{Full Conformal Prediction under Stochastic Non-Conformity Measure}

\begin{document}

\twocolumn[
  \icmltitle{Full Conformal Prediction under\texorpdfstring{\\}{ }Stochastic Non-Conformity Measure}

  \begin{icmlauthorlist}
    \icmlauthor{Thanawat Sornwanee}{stanford}
  \end{icmlauthorlist}

  \icmlaffiliation{stanford}{Stanford University}
  \icmlcorrespondingauthor{Thanawat Sornwanee}{}
  \icmlkeywords{Conformal prediction, stochastic algorithms, permutation invariance}

  \vskip 0.3in
]

\printAffiliationsAndNotice{}

\begin{abstract}
    The theory of full conformal prediction uses deterministic non-conformity measure, but modern usage of full conformal prediction often relies on machine learning training, making stochasticity inevitable. A simple sufficient condition of almost sure permutation invariance of the non-conformity measure can be too restrictive, so many have suggested the relaxation to permutation in distribution as a condition for full conformal prediction validity. We, however, show that this commonly known condition is actually insufficient. We then provide a correct sufficient condition: \emph{Conditional Independence \& Permutation Invariance in Distribution}, which encompasses several stochastic settings that may be used in machine learning.
\end{abstract}

\section{Full conformal prediction}

    Given the dataset of 
    \begin{align*}
        \left(\cc{z_i=(x_i, y_i)}_{i=1}^N, x_{N+1}\right),
    \end{align*}
    where $x_i$ is the input and $y_i$ is the output, we are interested in constructing a confidence bound on the unobserved output $y_{N+1}$. 

    A standard conformal prediction relies on the routine of retrieving conformal p-value from the rank of the non-conformity score. Afterwards, the conformal prediction returns a  confidence set
    \begin{align*}
        \left\{\hat{y}\in \mathcal{Y}: \text{conformal p-value}\oo{\cc{z_i}_{i=1}^N, \oo{x_{N+1}, \hat{y}}} \ge \alpha\right\}.
    \end{align*}
    
    In full conformal prediction setting, we are given a non-conformity score function $t: \mathcal{Z}^{N+1} \to \mathcal{S}$, where $\mathcal{S}$ is a totally preordered set endowed with a binary relation\footnote{which is reflexive, transitive, and complete.}$\succcurlyeq$, and is normally chosen to be $\oo{\mathbb{R}, \ge}$.~\citep{vovk2005algorithmic,vovk2016criteria} We the compute the score in leave-one-out manner, and then compute the conformal p-value from the rank of the score as outlined in the algorithm~\ref{alg:deterp}.

    \begin{algorithm}
    \caption{Conformal p-Value via Deterministic $t$}
    \label{alg:deterp}
    \begin{algorithmic}[1]
    \REQUIRE Full Data $z \in \mathcal{Z}^{N+1}$
    \FOR{$i = 1$ to $N+1$}
        \STATE Compute Non-Conformity Score
        \[
        s_i = t\oo{z_1, \dots, z_{i-1}, z_{i+1}, \dots, z_{N+1}, z_i}\]
    \ENDFOR
    \STATE Compute Conformal p-Value
    \[P = \frac{1}{N+1} \cc{1 + \sum_{i=1}^{N} \mathbf{1}_{S_{N+1} \succcurlyeq S_i}}\]
    \STATE \textbf{return} $P$
    \end{algorithmic}
    \end{algorithm}

    The result is that, if $Z = \cc{Z_i = (X_i, Y_i)}_{i=1}^{N+1}$ is exchangeable, and the non-conformity score function $t$ is permutation invariant in the first $N$ arguments, then the conformal p-value of the actual data $Z$ is a valid p-value\footnote{See appendix~\ref{appendix:pperm} for a formal definition.}. This implies that the actual output $Y_{N+1}$ is in the confidence set with respect to $\oo{\cc{Z_i}_{i=1}^N, X_{N+1}}$ with probability $\ge 1-\alpha$, where the randomness is from the stochasticity of $Z$ itself.

    In most cases, the function $t$ uses the first $N$ data entries, which is $\{z_j\}_{j \ne i}$,  symmetrically to train a regression model\footnote{If the training returns more estimators, such as variance estimator or quantile estimator, one can modify the score to incorporate them.~\cite{papadopoulos2002inductive, romano2019conformalized}} $\hat{f}_{-i}: \mathcal{X} \to \mathcal{Y}$, and use the loss $\norm{y_i - \hat{f}_{-i}(x)}$ to be the non-conformity score $S_i$.

    Note that the trained model $\hat{f}_{-i}$ does not have to be good or satisfy any conditions apart from that it training process must be deterministic and permutation invariant in the data input $\{z_j\}_{j \ne i}$. For example, a ridge regression or machine learning with fixed initialization and stochastic descent will satisfy this. 
    
    Permutation invariance seems to be natural to most modern machine learning algorithm, but deterministic training requirement is often violated. Modern applications of full conformal prediction often rely on a training of a machine learning model, which may involve stochasticity via stochastic gradient descent as well as random initialization.~\cite{lee2025leaveoneout,tailor2025approximating} Some may also employ different stochastic training model such as random forests.~\citep{linusson2014efficiency,gauraha2018conformalclassification}
    
    Apart from the stochasticity in the training, we will see that the stochasticity in the inference time is also not uncommon.  The emergence usage of modern LLM (large language model) as well as other generative model can have stochasticity in the inference such as from the API call itself.\footnote{Currently, the usage of LLM as well as other pretrained models in conformal prediction is more limited to a split conformal setting.~\citep{ravfogel2023conformal, quach2024conformal,su2024api,epstein2026llms} However, one can imagine the use of full conformal prediction by doing a fine-tuning steps, or other post-training alignment methods, with the input data. Though this has not been extensively studied} \cite{angelopoulos2022gentle} suggest adding a small white noise to each score can also be used for tie-breaking.

    This suggests that a better understanding of full conformal prediction under a stochastic non-conformity measure is crucial. 

\section{Stochasticity}
    
    Before discussing the prior works, we will provide some formulation of full conformal prediction under stochastic measure/score. This is when we replace the routine of computing conformal p-value from using a deterministic function $t: \mathcal{Z}^{N+1} \to \mathcal{S}$ to stochastic function(s) $T$. Note that we will assume that every function mentioned has to be measurable throughout.

    Since we may evaluate different non-conformity score with different randomness\footnote{For example, $\omega$ may dictate $3$ independently drawn random seed $\epsilon_1$, $\epsilon_2$, and $\epsilon_3$. The training process to get score $1$ may only utilize the seed $\epsilon_1$, while that of the score $2$ only uses $\epsilon_2$. Therefore, mathematically, we need to put the indicator for $T$.}, we then need to provide flexibility for the stochastic function of each coordinate to be different. Formally, we have the algorithmic random space $\oo{\Omega, \mathcal{F}, \mathbb{P}_{\text{alg}}}$, which will throughout be assumed to be independent from that generating data, and a vector of stochastic functions $T = \cc{T_i}_{i=1}^{N+1}$ where  $T_i: \mathcal{Z}^{N+1} \times \Omega \to \mathcal{S}$ for each $i \in \{1,2,\dots, N+1\}$. 
    
    The routine of computing non-conformity score becomes the algorithm~\ref{alg:stochp}. Note that, for a given $z$, the p-value $P$ still inherits randomness from the algorithm.
    \begin{algorithm}
    \caption{Conformal p-Value via Stochastic $T$}
    \label{alg:stochp}
    \begin{algorithmic}[1]
    \REQUIRE Full Data $z \in \mathcal{Z}^{N+1}$, Algorithmic Random Space $\oo{\Omega, \mathcal{F}, \mathbb{P}_{\text{alg}}}$
    \STATE Realize $\omega \sim \mathbb{P}_{\text{alg}}$
    \FOR{$i = 1$ to $N+1$}
        \STATE Compute Stochastic Non-Conformity Score
        \[S_i = T_i\oo{z_1, \dots, z_{i-1}, z_{i+1}, \dots, z_{N+1}, z_i; \omega}\]
    \ENDFOR
    \STATE Compute Stochastic Conformal p-Value
    \[P = \frac{1}{N+1} \cc{1 + \sum_{i=1}^{N} \mathbf{1}_{S_{N+1} \succcurlyeq S_i}}\]
    \STATE \textbf{return} $P$
    \end{algorithmic}
    \end{algorithm}

    The question of interest is then
    \begin{quote}
    \emph{What is a sufficient condition on the random functions $T = \cc{T_i}_{i=1}^{N+1}$ for the validity of the conformal prediction?}
    \end{quote}
    This is when the actual observation $Y_{N+1}$ is included in the confidence set with probability $\ge 1-\alpha$, where now the randomness comes from both the data $Z$ itself and the algorithm. This is equivalent to asking for a sufficient condition for the conformal p-value in the algorithm~\ref{alg:stochp} to be a valid p-value whenever the random input $Z$ is an exchangeable random vector. Formally, for any exchangeable $Z$, we require, for all $\alpha \ge 0$,
    \begin{align*}
        \prob{\text{conformal p-value}\oo{Z; \omega} \le \alpha}\le \alpha.
    \end{align*}
    
    \subsection{Prior Works}
    As discussed, the stochasticity in full conformal prediction is not uncommon, but the theoretical study is lacking. To the author's knowledge, only 5 papers explicitly mention about condition for stochastic conformal score. The mentions are brief: 3 of them are in footnotes. 4 claims are also incorrect.

    \paragraph{Incorrect Claims}
    
    The paper ``Conformal Prediction Beyond Exchangeability"\citep{barber2023conformal} has claimed that in a case where the stochastic $T$ is such that
    \begin{align*}
        &T_i(z_1, z_2, \dots, z_{i-1}, z_{i+1}, z_{i+2}, \dots, z_{N+1}, z_i; \omega)\\&= \oo{\mathcal{A}(z_1, z_2, \dots, z_{i-1}, z_{i+1}, z_{i+2}, \dots, z_{N+1}; \omega)}\oo{z_i}
    \end{align*}
    for some random algorithm $\mathcal{A}$ mapping $z_{-i} \in \mathcal{Z}^{N}$ to a function $\hat{\mu}: \mathcal{X} \to \mathcal{Y}$, which is assumed to be $\mathbb{R}$, then we only require
    \begin{align*}
        \mathcal{A}\oo{z; \cdot} \overset{\text{d}}{=} \mathcal{A}\oo{\cc{z_{\sigma(i)}}_{i=1}^N; \cdot}
    \end{align*}
    for all $z \in \mathcal{Z}^{N+1}$ and for all permutation $\sigma \in S_N$.\footnote{The footnote 2 of~\citep{barber2023conformal} states that ``If $\mathcal{A}$ is a randomized algorithm, then this equality is only required to hold in a distributional sense," where the original equality refers to $\mathcal{A}\oo{z} =\mathcal{A}\oo{\cc{z_{\sigma(i)}}_{i=1}^N}$, which is a commonly used sufficient condition in deterministic full conformal prediction.} The same permutation symmetry in distribution sense is also claimed to be a sufficient condition in the paper ``Training-conditional coverage for distribution-free predictive inference"\citep{bian2023training}\footnote{The subsubsection 2.1.3 ``A note on randomized algorithm" of~\citep{bian2023training} states that ``The background given above implicitly  the algorithm $\mathcal{A}$ as a deterministic function of the training data—that is, we view $\mathcal{A}$ as a function $\oo{(X_1, Y_1), \dots, (X_n, Y_n)} \mapsto \hat{\mu}$. In many settings, however, it is common to use a randomized regression algorithm-for instance, stochastic gradient descent. In this setting, we can formally view $\mathcal{A}$ as a function $\oo{(X_1, Y_1), \dots, (X_n, Y_n), \xi} \mapsto \hat{\mu}$, where the term $\xi$ introduces stochastic noise (effectively, a random seed). All the results described above hold for both the deterministic and randomized settings. (For results that assume $\mathcal{A}$ is symmetric, the symmetry condition (6) should be understood in the distributional sense-that is, the training data points are treated symmetrically with respect to the randomized training procedure. For example, for stochastic gradient descent, if data points are drawn uniformly at random during the training epochs, then symmetry is satisfied.)"}and by~\cite{lee2023distribution}\footnote{The footnote 12 of~\cite{lee2023distribution} mentions that ``The framework also allows for a randomized algorithm $\mathcal{A}$, in which case the symmetry condition is required to hold in a distribution sense."}.
    This condition implies that, for all $i,j \in \{1,2,\dots, T+1\}$, $z \in \mathcal{Z}^{N+1}$, $\sigma \in S_N$,
    \begin{align*}
        T_i(z;\cdot) \overset{\text{d}}{=} 
        T_j\oo{\cc{z_{\sigma(k)}}_{k=1}^N, z_{N+1}; \cdot},
    \end{align*}
    and $T_i = T_j$. Note that the equation can be further decomposed into two parts: 1.) within the same index, equality in distribution across permutation and 2.) equality in distribution across different indices. The latter is implied by $T_i=T_j$, but we keep it as two separate conditions since we will soon relax the equality.
    
    This condition is claimed to be a sufficient condition by~\cite{bai2024optimized}.\footnote{The footnote 1 of~\citep{bai2024optimized} mentions that ``In the general case where $\mathcal{V}$ is a randomized algorithm, we require ... Definition 4 to hold in a distributional sense, i.e., $=$ could be replaced by $\overset{\text{d}}{=}$," where the equality refers to the one shown here.} 

    However, all these 4 papers are not correct, and we will give out a simple counterexample in the section~\ref{section:distributionisnotdufficiebt}.

    \paragraph{Correct but Narrow Claim} \cite{lee2025full} seem\footnote{The proof of the theorem 2 of~\citep{lee2025full} states that ``The scoring function $V^{(k)}(\cdot)$ was trained invariant to the order of elements inside $E_j$ (this assumes that $V^{(k)}$ does not use external randomness during training; similar results can be attained for random training procedures by conditioning on the random seed)," $V^{(k)}$ is equivalent to $T_i$ in our notation.} to claim that a sufficient condition is when, in an almost sure sense,
    \begin{align*}
        T_i(z;\omega) = 
        T_j\oo{\cc{z_{\sigma(k)}}_{k=1}^N, z_{N+1}; \omega}.
    \end{align*}
    Similarly, this condition can be decomposed into two components: almost sure equality across indices, and almost sure equality among permutation in the same index.

    This is obviously sufficient but quite strong. For example, it cannot handle stochastic gradient descent in a usual sense.\footnote{We provide some examples where this is satisfied in the subsection~\ref{subsection:almostsure}.}

\section{Why equality in distribution is insufficient?}
\label{section:distributionisnotdufficiebt}

    Below we will show a simple example where the equality in distribution under permutation is satisfied for the training algorithm $\mathcal{A}$, so
    the equality in distribution under permutation of the first $N$ element is satisfied for the stochastic scoring functions $T$, but the the full conformal prediction is invalid.

    Consider the case where $(X_i,Y_i) \overset{\text{iid}}{\sim} \text{Unif}\left([0,1]^2\right)$, and $N=2$. Thus, the random vector $Z = (Z_1, Z_2, Z_3) \in \oo{[0,1]^2}^3$ is exchangeable.
    
    Let the random space be such that we can define two independent standard Brownian motions $\left\{B_t(\omega)\right\}_{t \in [0,1]}$ and $\left\{B'_t(\omega)\right\}_{t \in [0,1]}$, and we define a random scoring function for all $i \in \{1,2,3\}$ to be
    \begin{align*}
        T_i\left(z; \omega\right) := \oo{\mathcal{A}\left(z_1, z_2; \omega\right)}(z_3)
        :=
        B_{y_2}(\omega) + B'_{1-y_2}(\omega),
    \end{align*}
    for any $z \in \oo{[0,1]^2}^3$, thereby making
    \begin{align*}
        T\left(z; \cdot\right) \sim N(0,1),
    \end{align*}
    and similarly $\mathcal{A}$ is permutation invariant in distribution, since it always return a constant function whose value is distributed according to $N(0,1)$.

    The non-conformity scores from the algorithm~\ref{alg:stochp} conditioned on the data $Z$ are
    \begin{align*}
        \begin{cases}
            \left(S_1 \vert Z\right)(\cdot)  &= T\left(Z_2, Z_3, Z_1;\cdot\right)
            = B_{Y_3} + B'_{1-Y_3}
            \\
            \left(S_2 \vert Z\right)(\cdot) &= T\left(Z_1, Z_3, Z_2;\cdot\right)
            = B_{Y_3} + B'_{1-Y_3}\\
            \left(S_3 \vert Z\right)(\cdot)  &= T\left(Z_1, Z_2, Z_3;\cdot\right)
            = B_{Y_2} + B'_{1-Y_2}
        \end{cases},
    \end{align*}
    making the conformal p-value 
    \begin{align*}
        \left(P\middle\vert Z\right)(\cdot) &=
        \left(\frac{1+\mathbf{1}_{S_3 \ge S_1} + \mathbf{1}_{S_3 \ge S_2}}{3} \middle\vert Z\right)\\
        &=
        \frac{1}{3} + \frac{2}{3}
        \left(\mathbf{1}_{B_{Y_2} + B'_{1-Y_2} \ge B_{Y_3} + B'_{1-Y_3}} \middle\vert Z\right).
    \end{align*}
    Under the case when $Y_2 \ne Y_3$, which happens almost surely, we have that $B_{Y_2} + B'_{1-Y_2} \ge B_{Y_3} + B'_{1-Y_3}$ with probability $\frac{1}{2}$ conditioned on $Y_2$ and $Y_3$ (so the randomness is from the algorithmic randomness). Therefore, $\left(P\middle\vert Z\right)(\cdot) \sim \frac{1}{2}\delta_{\frac{1}{3}} + \frac{1}{2}\delta_{1}$ almost surely in $Z$ (not in the algorithmic random space).
    Thus, almost surely in $Z$,
    \begin{align*}
        \prob{P\le \frac{1}{3}} = 
        \expect{\cprob{P\le \frac{1}{3}}{Z}} = \frac{1}{2} > \frac{1}{3},
    \end{align*}
    contradicting with that $\mathbb{P}\left(P \le \frac{1}{3}\right) \le \frac{1}{3}$ for $P$ to be a valid p-value. Thus, $P$ cannot be a valid p-value.

    Thus, the sufficiency claims in~\citep{barber2023conformal,bian2023training,lee2023distribution,bai2024optimized} are not correct.\footnote{It is easier to construct a counter example when we allow the random function for each score computation to use different part of the randomness. For example, we can consider the case when $\omega \sim \text{Unif}([0,1]^2)$, and $S_1=S_2=\omega_1$, while $S_3 = \omega_2$.}

\section{Examples of sufficient conditions}

    Recall that the validity of conformal prediction comes from the validity of permutation test. Thus, we only need that, as long as the data points $Z$ are exchangeable,
    the non-conformity scores $S := \cc{S_i}_{i=1}^{N+1}$ are also exchangeable (unconditionally). 

    \subsection{Deterministic with Permutation Invariance}
    \label{subsection:deterministic}

    For deterministic case, a simple sufficient condition is permutation invariant among first $N$ arguments, meaning that
    \begin{align}
    \label{eqn:det}
        T_i(z;\omega) = T_j(\cc{z_{\sigma(k)}}_{k=1}^N, z_{N+1}; \omega')
    \end{align}
    for all $i,j \in \{1,2,\dots, N+1\}$, $z \in \mathcal{Z}^{N+1}$, $\sigma \in S_N$, and for almost surely $\omega, \omega'$. This is a standard condition, and can be achieved by symmetrization over a deterministic function $f: \mathcal{Z}^{N+1} \to \mathbb{R}$ such as choosing $T_i(z;\omega) = \frac{1}{N!} \sum_{\sigma \in S_N} f\left(z_{\sigma(1)}, z_{\sigma(2)}, \dots, z_{\sigma(N)}, z_{N+1}\right)$, but this requires $N!$ computation.

    \subsection{Stochastic with Almost Sure Permutation Invariance}
    \label{subsection:almostsure}
    As mentioned, this is when we weaken the degeneracy requirement (by dropping $\omega'$) from the condition~\ref{eqn:det} into
    \begin{align}
    \label{eqn:ass}
        T_i(z;\omega) = T_j(\cc{z_{\sigma(k)}}_{k=1}^N, z_{N+1}; \omega)
    \end{align}
    almost surely. This is still hard to achieve, as it is translated into that the permutation invariance holds for each random seed $\omega$, and we have to reuse the same seed for each score evaluation process. For example, this can be satisfied when the randomness only comes in through random initialization of neural network parameters and full gradient descent is implemented. Another example is when symmetrization is used with the same fixed seed.
    
    For most machine learning training, this rarely holds. Stochastic gradient descent with mini-batch\footnote{Unless we enforce the algorithm to re-sort the datapoints first before using any stochasticity.} may sample a random index $I(\omega)$ of the data to be used in the gradient computation. By applying the permutation before running the algorithm, the $I(\omega)^{\text{th}}$ datapoint in the permuted dataset will correspond to the $\sigma\left(I(\omega)\right)^{\text{th}}$ datapoint of the original dataset.

    Moreover, this condition is difficult to satisfy if we have randomness in the inference.

    \subsection{Stochastic with Independence \& Permutation Invariance in Distribution}
    \label{subsection:conditionalindependence}

    We weaken the almost sure equality~\ref{eqn:ass} to equality in distribution
    \begin{align}
    \label{eqn:indsym}
        T_i\oo{z; \cdot}
        \overset{\text{d}}{=}
        T_j\oo{\cc{z_{\sigma(k)}}_{k=1}^N, z_{N+1}; \cdot},
    \end{align}
    but with additional independence condition that
    \begin{align*}
        \left\{T_i\oo{z_1, z_2, \dots, z_{i-1}, z_{i+1}, z_{i+2}, \dots, z_{N+1}, z_i;\cdot}\right\}_{i=1}^{N+1}
    \end{align*}
    is a collection of independent variables.

    This is perhaps the most commonly used method in full conformal prediction. Each time we train a machine learning model, the stochasticity is assumed to be independent from those in the past. If there exists randomness in the inference, as long as such randomness are independent from each other, then the validity of full conformal prediction will still held. However, we can see that this can be violated in some case, such as when each training share the same random seed, which may be used for reproducibility.  

\section{General Sufficient Condition: Conditional Independence \& Permutation Invariance in Distribution}

    Note that the deterministic case~\ref{subsection:deterministic} is a special case for both stochastic cases~\ref{subsection:almostsure} and~\ref{subsection:conditionalindependence}, but the two stochastic cases are not a special case of each other. We generalize the combination of the two stochastic cases to show that a more general condition of conditional independence and permutation invariance in distribution is sufficient for full conformal prediction validity.

    \begin{definition}
    [Conditional Independence \& Permutation Invariance in Distribution]
        \label{definition:CIS}
            The stochastic non-conformity score functions $T=[T_i]_{i=1}^{N+1}$, where $T_i: \mathcal{Z}^{N+1}\times \Omega \to \mathcal{S}$ for each $i \in \{1,2,\dots, N+1\}$ are conditionally independent and permutation invariant in distribution if there exists some random variable $C$ defined on the algorithmic random space such that, for all $z \in \mathcal{Z}^{N+1}$ and for almost surely $C$,
            \begin{align}
            \label{eqn:condsym}
                \left(T_i\oo{z; \cdot} \middle\vert C\right)
                \overset{\text{d}}{=}
                \left(T_j\oo{\cc{z_{\sigma(k)}}_{k=1}^N, z_{N+1}; \cdot} \middle\vert C\right)
            \end{align}
            for all $\sigma \in S_N$, for all $i,j \in \{1,2,\dots, N+1\}$, and the collection of random variables
            \begin{align*}
                \left\{
                    T_i\oo{z_1, z_2, \dots, z_{i-1}, z_{i+1}, z_{i+2}, \dots, z_{N+1}, z_i;\cdot}
                \right\}_{i=1}^{N+1}
            \end{align*}
            is independent conditioned on $C$.
        \end{definition}

        By taking $C$ to be non-informative, this reverts back to the independence and permutation invariance in distribution setting~\ref{subsection:conditionalindependence}, and, by taking $C$ to be $\omega$, this reverts back to the almost sure permutation invariance~\ref{subsection:almostsure}.

        \begin{theorem}
        \label{theorem:validity}
            The conformal p-value whose stochastic non-conformity score functions $T$ are conditionally independent and permutation invariant in distribution is valid.
        \end{theorem}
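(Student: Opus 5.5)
The plan is to reduce validity to exchangeability of the score vector $S=[S_i]_{i=1}^{N+1}$ (unconditionally, over both data and algorithm), as noted at the start of the section on sufficient conditions. Once $S$ is exchangeable, the rank argument of the deterministic case applies. That argument gives $\mathbb{P}(P\le\alpha)\le\alpha$, because the rank of $S_{N+1}$ among exchangeable scores, with ties counted in the conservative direction by $\succcurlyeq$, is stochastically dominated by a uniform rank.

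The key steps are as follows, working conditionally on $C$ and on the data $Z$. Because the algorithmic space is independent of the data, conditioning on $Z=z$ leaves the law of $(\omega, C)$ unchanged. For fixed $z$ and almost every $C$, write $S_i(z) = T_i(z_{-i}, z_i;\cdot)$, where $z_{-i}$ denotes the other coordinates in their natural order. By conditional independence, the conditional law of $S(z)$ given $C$ is the product measure $\bigotimes_{i} \mu_i(z, C)$, where $\mu_i(z,C)$ is the law of $T_i(z_{-i},z_i;\cdot)\mid C$.

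Next, fix a permutation $\pi\in S_{N+1}$ and set $z^\pi=(z_{\pi(1)},\dots,z_{\pi(N+1)})$. Then $S_i(z^\pi)$ is computed by $T_i$ from the argument $z_{\pi(i)}$ in the last slot, with the remaining $N$ points forming the set $\{z_k\}_{k\ne\pi(i)}$ in some order. By the conditional permutation invariance in distribution (\ref{eqn:condsym}), which allows any $i,j$ and any reordering of the first $N$ arguments, we get $\mu_i(z^\pi,C)=\mu_{\pi(i)}(z,C)$. Combined with conditional independence, this gives
\begin{align*}
\left(S(z^\pi)\,\middle\vert\, C\right) \overset{\text{d}}{=} \left(\bigl[S_{\pi(i)}(z)\bigr]_{i=1}^{N+1} \,\middle\vert\, C\right).
\end{align*}
So for every bounded measurable $g$ we have $\mathbb{E}[g(S(z^\pi))\mid C]=\mathbb{E}[g(S(z)\circ\pi)\mid C]$. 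Integrating over $C$ yields $h(z^\pi)=h_\pi(z)$, where $h(z):=\mathbb{E}_\omega[g(S(z))]$ and $h_\pi(z):=\mathbb{E}_\omega[g(S(z)\circ\pi)]$. Finally, averaging over $Z$ and using exchangeability $Z^\pi\overset{\text{d}}{=}Z$ together with the independence of data and algorithm, we obtain $\mathbb{E}[g(S)]=\mathbb{E}[g(S\circ\pi)]$. This is exactly the exchangeability of $S$.

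The main obstacle is measure-theoretic bookkeeping rather than any conceptual difficulty. The conditional statements hold ``for all $z$ and almost surely $C$'', and the null set may depend on $z$, $\sigma$ and $i,j$. To pass to random $Z$ we need $h$ and $h_\pi$ to be jointly measurable in $z$, which follows from the standing measurability of $T_i$ on $\mathcal{Z}^{N+1}\times\Omega$ and Fubini. We should also avoid conditioning on $C$ when integrating over $Z$. Working with the unconditional expectations $h(z^\pi)=h_\pi(z)$, which hold for every $z$, sidesteps the null-set dependence.

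After that, the last step is the standard permutation-test bound. Let $R$ count the indices $i\le N+1$ with $S_{N+1}\succcurlyeq S_i$, so that $P=R/(N+1)$. For each $k$, $\mathbb{P}(P\le \alpha)$ equals the average over $k$ of $\mathbb{P}\bigl(\#\{i: S_k\succcurlyeq S_i\}\le \alpha(N+1)\bigr)$, by exchangeability. At most $\lfloor\alpha(N+1)\rfloor$ indices $k$ can satisfy this inequality for any realization, which gives $\mathbb{P}(P\le\alpha)\le\alpha$.
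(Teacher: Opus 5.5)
Your proposal is correct and follows essentially the same route as the paper. You condition on $C$ and use conditional independence plus the conditional symmetry condition to get permutation equivariance of the joint score map in distribution; the paper does this on rectangles with a $\pi$--$\lambda$ argument, you via product conditional laws and test functions. You then transfer exchangeability of $Z$, via its independence from the algorithmic randomness, to exchangeability of $S$, and finish with the same rank-counting bound. Your extra care in integrating out $C$ for each fixed $z$ before averaging over $Z$, and in invoking Fubini for joint measurability, fills in details the paper leaves implicit.
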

        The proof\footnote{which is in the appendix~\ref{appendix:proof}}, first, shows that the joint score computation, with
        $z_{-i}:=(z_1,\ldots,z_{i-1},z_{i+1},\ldots,z_{N+1})$,
        \begin{align*}
            A(z;\omega) := \cc{T_i\oo{z_{-i},z_i;\omega}}_{i=1}^{N+1}
        \end{align*}
    is permutation equivariance in joint distribution. This means that
        \begin{align*}
            \cc{\oo{A(z; \cdot)}_{\pi(i)}}_{i=1}^{N+1} \overset{\text{d}}{=} A(\cc{z_{\pi(i)}}_{i=1}^{N+1};\cdot)
        \end{align*}
        for all $z \in \mathcal{Z}^{N+1}$ and $\pi \in S_{N+1}$.

        Afterwards, we use exchangeability of $Z$ to conclude that the scores
        \begin{align*}
            S =A(Z;\omega)
        \end{align*}
        are also exchangeable.

\begin{figure}[t]
    \centering
    \includegraphics[width=1\linewidth]{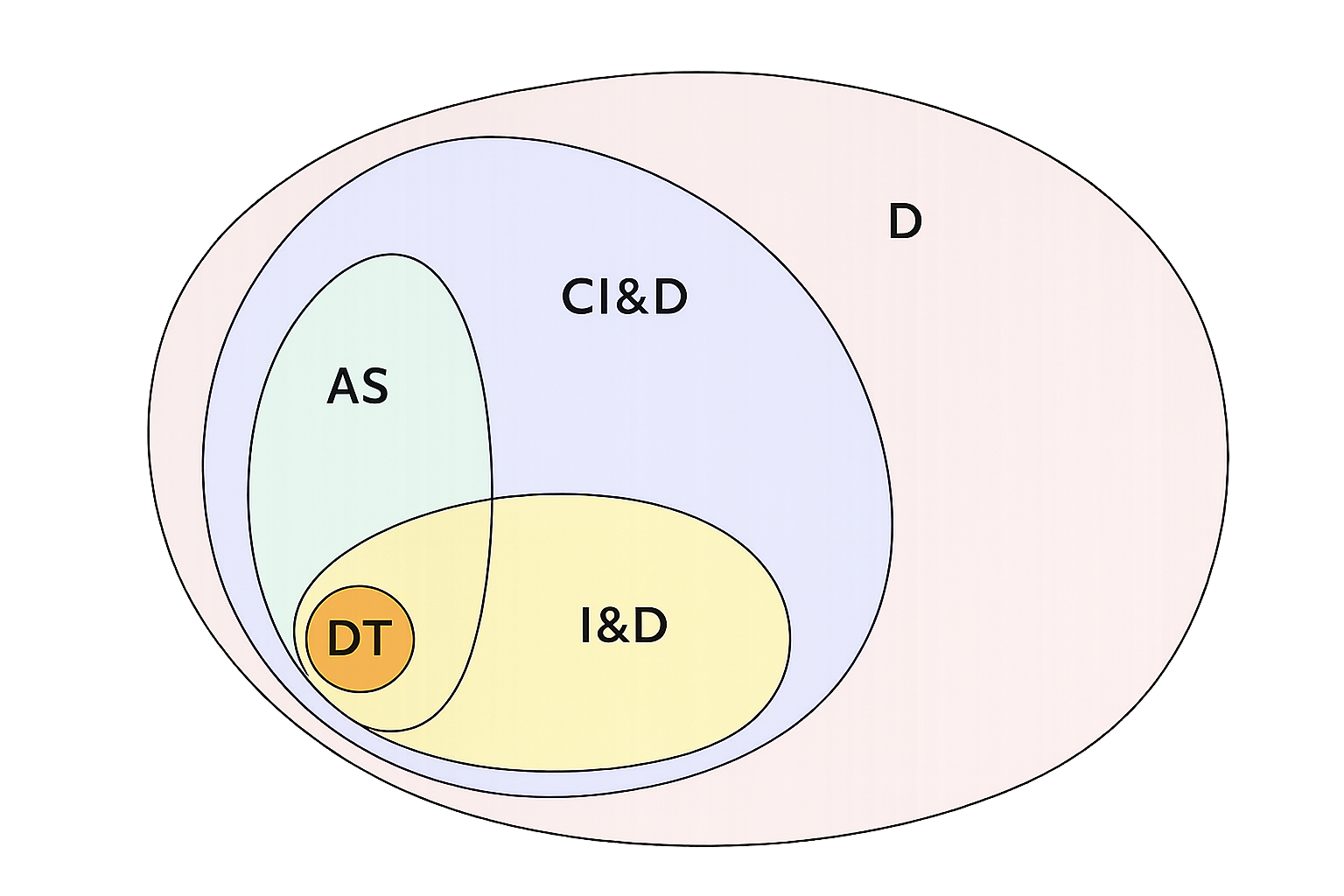}
    \caption{\emph{DT} stands for deterministic permutation invariance, which is a standard assumption in full conformal prediction as outlined in the subsection~\ref{subsection:deterministic}.~\citep{vovk2005algorithmic} \emph{AS} stands for almost sure permutation invariance as outlined in the subsection~\ref{subsection:almostsure} \emph{I\&D} stands for independence and permutation invariance in distribution as outlined in the subsection~\ref{subsection:conditionalindependence}. \emph{CI\&D} stands for conditional independence and permutation invariance in distribution as defined in the definition~\ref{definition:CIS}. We have established through the main theorem~\ref{theorem:validity} that \emph{CI\&D} is sufficient to ensure the validity of full conformal prediction when the non-conformity measure is stochastic. \emph{D} stands for permutation invariance in distribution, which is the incorrect sufficient condition in the literature~\citep{barber2023conformal,bian2023training,lee2025full,bai2024optimized}. In this figure, we show that $\text{DT} \subseteq \text{AS} \cap \text{I\&D}$, while $\text{AS} \cup \text{I\&D} \subseteq \text{CI\&D} \subseteq \text{D}$. Note that these conditions are defined with respect to an algorithmic random space $\oo{\Omega, \mathcal{F}, \mathbb{P}_{\text{alg}}}$ as well as $\mathcal{S}$ and $\mathcal{Z}$. It is easy to show that, for each $\subseteq$ relation, we can find a setting such that every $\subseteq$ relation is indeed $\subsetneq$.}
    \label{fig:placeholder}
\end{figure}

    This condition is general enough that could be used to satisfied most valid use case of full conformal prediction. For example, one can imagine when the random seed for the random initialization of the neural network parameters are fixed, while the random seeds for the order of stochastic gradient descent is redrawn each time. In such case, the condition $C$ will represent the initialization of the neural network parameters.

    It is also easy to see that this is not a necessary condition. For example, we can replace with the condition like conditional exchangeability. This condition will imply the condition ``Conditional Independence \& Permutation Invariance in Distribution" we introduced but at the cost of harder verification.

    Another sufficient condition will be to revert to only use deterministic score function/measure but augmenting the inputs $z$ with white noise.~\citep{vovk2005algorithmic} It can be seen that this is not implying nor implied by our condition. This is because the conditional independence requirement is only ``on the path", and does not require every possible permutations in a two-dimensional table to satisfy the conditional independence.
    
\section{Conclusion}

    Our paper shows that the long incorrectly believed assumption for the validity of full conformal prediction is however incorrect. We show it by a simple construction with a generous interpretation of the claims made in the literature. We then provide a sufficient condition.

\section*{Acknowledgement}

    The author thanks Lihua Lei and Vladimir Vovk for useful discussions.

\bibliographystyle{icml2026}
\bibliography{egbib}

\appendix

\section{p-Value \& permutation test}
\label{appendix:pperm}

    We define a valid p-value, and provide background on permutation test, which is the backbone conformal prediction.
    
    \begin{definition}
        [Valid p-Value]
        A random variable $P$ is a valid p-value if
        \begin{align*}
            \prob{P \le \alpha} \le \alpha 
        \end{align*}
        for all $\alpha \ge 0$.
    \end{definition}
    
    \begin{lemma}
    \label{lemma:exchangepvalue}
        Let $S:=\cc{S_i}_{i=1}^n$ be an exchangeable random variable where $S_i \in \mathcal{S}$ for each $i \in \{1,2,\dots, n\}$ and $\oo{\mathcal{S}, \succcurlyeq}$ is a total preorder. Define
        \begin{align*}
            P_1 := \frac{\sum_{i=1}^n \mathbf{1}_{S_1 \succcurlyeq S_i}}{n}.
        \end{align*}
        Then its distribution satisfies
        \begin{align*}
            \mathcal{L}(P_1)
            \succcurlyeq_{\mathrm{FOSD}}
            \mathrm{Unif}\!\left(\left\{\frac{1}{n},\ldots,1\right\}\right)
            \succcurlyeq_{\mathrm{FOSD}}
            \mathrm{Unif}([0,1]).
        \end{align*}
    \end{lemma}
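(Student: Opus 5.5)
We prove the lemma by symmetrizing over the uniformly random position of the test point and then counting ranks deterministically within each realization. Set $R_k := \sum_{i=1}^n \mathbf{1}_{S_k \succcurlyeq S_i}$, so that $P_1 = R_1/n$. By exchangeability, $(S_k, S_1, \dots)$ obtained by swapping coordinates $1$ and $k$ has the same law as $S$. Hence $R_k \overset{\text{d}}{=} R_1$ for every $k$, and therefore, for any $m$,
\begin{align*}
    \prob{R_1 \le m} = \frac{1}{n}\sum_{k=1}^n \prob{R_k \le m} = \expect{\frac{1}{n}\#\{k : R_k \le m\}}.
\end{align*}
It thus suffices to prove the deterministic claim that, for every fixed vector $s \in \mathcal{S}^n$ and every integer $m \in \{0,1,\dots,n\}$, $\#\{k : r_k(s) \le m\} \le m$, where $r_k(s) = \#\{i : s_k \succcurlyeq s_i\}$.

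The deterministic step is the main point, and it is where the total preorder matters. Let $K = \{k : r_k \le m\}$ and suppose $K$ is nonempty. By completeness and transitivity we can pick $k^\ast \in K$ that is maximal within $K$, i.e.\ $s_{k^\ast} \succcurlyeq s_k$ for all $k \in K$. Every $k \in K$ is then counted in $r_{k^\ast}$. This gives $|K| \le r_{k^\ast} \le m$.

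Ties need no special care: with ties, $r_k$ counts all tied elements, which only increases ranks and so only helps. Combining the two steps gives $\prob{P_1 \le m/n} \le m/n$ for every integer $m$.

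To pass to a general $\alpha \in [0,1]$, observe that $P_1$ takes values only in $\{1/n,\dots,1\}$. Hence $\prob{P_1 \le \alpha} = \prob{P_1 \le \lfloor n\alpha \rfloor / n} \le \lfloor n\alpha\rfloor/n \le \alpha$. The case $\alpha \ge 1$ is trivial.

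This is exactly the stochastic dominance $\mathcal{L}(P_1) \succcurlyeq_{\mathrm{FOSD}} \mathrm{Unif}(\{1/n,\dots,1\})$. The CDF of the discrete uniform at $\alpha$ is $\lfloor n\alpha\rfloor/n$, which is at most $\alpha$, the uniform $[0,1]$ CDF. That gives the second dominance $\mathrm{Unif}(\{1/n,\dots,1\}) \succcurlyeq_{\mathrm{FOSD}} \mathrm{Unif}([0,1])$ and completes the chain.
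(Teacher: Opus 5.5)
Your proof is correct and follows the same route as the paper: use exchangeability to write $\mathbb{P}(P_1 \le k/n)$ as the expected fraction of indices $i$ with $P_i \le k/n$, then bound that fraction deterministically by $k/n$. The paper only asserts the counting bound, and your maximal-element argument supplies its proof; you also spell out the step from the grid points $k/n$ to general $\alpha$ that the two FOSD relations need.
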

    \begin{proof}
        Define $P_i:= \frac{\sum_{j=1}^n \mathbf{1}_{S_i \succcurlyeq S_j}}{n}$. Exchangeability of $S$ implies exchangeability of $P$. Note that, for any integer $k \in \{1,2,\dots, n\}$, there can be at most $k$ values of $i \in \{1,2,\dots, n\}$ where $P_i \le \frac{k}{n}$. Hence,
        \begin{align*}
            \prob{P_1 \le k/n}
            &= \expect{\frac{1}{n}
            \sum_{i=1}^n \mathbf{1}_{P_i \le k/n}} \\
            &\le \frac{k}{n},
        \end{align*}
        implying the first order stochastic dominations.
    \end{proof}
    \begin{corollary}
    \label{corollary:validpexchange}
        $P_1$ in the lemma~\ref{lemma:exchangepvalue} is a valid p-value if $S$ is exchangeable.
    \end{corollary}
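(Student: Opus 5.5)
The plan is to read off validity directly from the first stochastic dominance in Lemma~\ref{lemma:exchangepvalue}, reducing a general threshold $\alpha$ to the grid $\{k/n\}$. Since $P_1$ takes values only in $\{1/n, 2/n, \dots, 1\}$, the event $\{P_1 \le \alpha\}$ depends on $\alpha$ only through $\lfloor n\alpha \rfloor$.

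Concretely, I will split into three cases.

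\emph{Case $\alpha \ge 1$.} Here the bound is trivial, since $\prob{P_1 \le \alpha} \le 1 \le \alpha$.

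\emph{Case $0 \le \alpha < 1/n$.} Since $P_1 \ge 1/n$, which follows from reflexivity $S_1 \succcurlyeq S_1$, the event is empty. Hence $\prob{P_1 \le \alpha} = 0 \le \alpha$.

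\emph{Case $1/n \le \alpha < 1$.} Set $k := \lfloor n\alpha \rfloor \in \{1, \dots, n-1\}$. Because $P_1$ lives on the grid, $\{P_1 \le \alpha\} = \{P_1 \le k/n\}$. The bound $\prob{P_1 \le k/n} \le k/n$, which is exactly what the first dominance in Lemma~\ref{lemma:exchangepvalue} states and what its proof establishes, then gives
\begin{align*}
\prob{P_1 \le \alpha} = \prob{P_1 \le k/n} \le \frac{k}{n} \le \alpha .
\end{align*}

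To connect this with the conformal p-value of Algorithm~\ref{alg:stochp}, I will also record a remark. The quantity $\frac{1}{N+1}\bigl[1 + \sum_{i=1}^{N} \mathbf{1}_{S_{N+1} \succcurlyeq S_i}\bigr]$ equals $\frac{1}{N+1}\sum_{i=1}^{N+1} \mathbf{1}_{S_{N+1} \succcurlyeq S_i}$, again by reflexivity. Relabeling the coordinates so that index $N+1$ becomes index $1$ preserves exchangeability. So the corollary, applied with $n = N+1$, yields validity whenever the score vector $S$ is exchangeable; this is the form in which it will be used for Theorem~\ref{theorem:validity}.

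No step here is a genuine obstacle. The only point requiring care is the grid reduction for non-grid $\alpha$, which is why I handle it explicitly through $\lfloor n\alpha \rfloor$ rather than appealing loosely to the continuous $\mathrm{Unif}([0,1])$ dominance.
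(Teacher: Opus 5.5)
Your proposal is correct and follows essentially the same route as the paper: both read validity off the stochastic dominance in Lemma~\ref{lemma:exchangepvalue}. Your explicit reduction via $k=\lfloor n\alpha\rfloor$, together with the edge cases $\alpha<1/n$ and $\alpha\ge 1$, simply spells out the second dominance $\mathrm{Unif}(\{1/n,\ldots,1\}) \succcurlyeq_{\mathrm{FOSD}} \mathrm{Unif}([0,1])$, which the paper invokes in one line and states in the lemma without a separate argument.
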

    \begin{proof}
        From the lemma~\ref{lemma:exchangepvalue}, we have that $\mathbb{P}\left(P_1 \le \alpha\right) \le \left(\text{Unif}([0,1])\right)([0, \alpha]) = \alpha$, so $P_1$ is a valid p-value.
    \end{proof}

    In permutation testing~\citep{lehmann2005testing}, we test a hypothesis of whether random variables $Z := \cc{Z_i}_{i=1}^n$ are exchangeable, by randomly permuting the variables and compute test statistic each time. Under such hypothesis, the test statistics should be exchangeable, so $P_1 \le \alpha$ with probability $\le \alpha$, meaning that it is a valid p-value.  

\section{Permutation Equivariance in Distribution algorithm}
\label{appendix:Equi}

    To prove the main theorem~\ref{theorem:validity}, we recall that $Z$ is an exchangeable random variable whose stochasticity is independent from that of the stochastic algorithm and $\oo{\mathcal{S},\succcurlyeq}$ is already assumed to be a total preorder. Thus, we only need to ensure that a stochastic algorithm $A: \mathcal{Z}^{N+1} \times \Omega \to \mathcal{S}^{N+1}$ that jointly return the score
    \begin{align*}
        S := \cc{S_i}_{i=1}^{N+1} := A(Z; \omega)
    \end{align*}
    will be able to transfer the exchangeability in $Z$ into the exchangeability in $S$, both of which in a distribution sense. We can then construct a p-value
    \begin{align*}
        P := \frac{1}{N+1}\cc{1+\sum_{i=1}^N \mathbf{1}_{S_{N+1} \succcurlyeq S_i}},
    \end{align*}
    and get its validity by the corollary~\ref{corollary:validpexchange}.

    In this section, we first show that permutation equivariant in joint distribution is a sufficient condition (in the definition~\ref{def:equivaraibt}) for the stochastic algorithm $A$ to preserve exchangeability. Next, we will show a condition on the non-conformity score functions $T$ to achieve permutation equivariant in joint distribution.

    \subsection{Definition \& property}
    
        A convenient sufficient condition for exchangeability to be inherited through $A$ is that the stochastic algorithm $A$ is permutation equivariant in joint distribution.
        \begin{definition}
            [Permutation Equivariance in Joint Distribution]
            \label{def:equivaraibt}
            A random function $A: \mathcal{Z}^{N+1} \times \Omega \to \mathcal{S}^{N+1}$ is permutation equivariant in joint distribution if, for any permutation $\pi \in S_{N+1}$, for any $z \in \mathcal{Z}^{N+1}$,
            \begin{align*}
                A\left(\left[z_{\pi(i)}\right]_{i=1}^{N+1}; \cdot\right)
                \overset{\text{d}}{=}
                \left[
                (A(z;\cdot))_{\pi(i)}
                \right]_{i=1}^{N+1}.
            \end{align*}
        \end{definition}
        Recall that exchangeable is the same as permutation invariance in distribution. It is then unsurprising that $S = A(Z;\omega)$ can inherit the exchangeability\footnote{Note that the distribution of $Z$ is fixed to be an exchangeable one, so we leave it out of the lemma statement.} in distribution from $Z$.
        \begin{lemma}
        [Exchangeability \& Permutation Equivariance imply Exchangeability]
        \label{lemma:exchange+equivariance}
            If $A: \mathcal{Z}^{N+1} \times \Omega \to \mathcal{S}^{N+1}$ is permutation equivariant in joint distribution, then $A(Z; \omega)$ is exchangeable.
        \end{lemma}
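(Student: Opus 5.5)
To prove Lemma~\ref{lemma:exchange+equivariance}, I will show that for every $\pi \in S_{N+1}$ the permuted score vector $[S_{\pi(i)}]_{i=1}^{N+1}$ has the same law as $S = A(Z;\omega)$. The approach is to condition on the data and use that $\omega$ is independent of $Z$.

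First, write the joint law of $S$ as a mixture. For a measurable set $B \subseteq \mathcal{S}^{N+1}$, independence of $\omega$ from $Z$ and Fubini (measurability of $A$ is assumed throughout) give
\begin{align*}
\prob{[S_{\pi(i)}]_{i=1}^{N+1} \in B} = \int \mathbb{P}_{\text{alg}}\oo{[(A(z;\cdot))_{\pi(i)}]_{i=1}^{N+1} \in B}\, d\mathcal{L}(Z)(z).
\end{align*}

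Second, apply Definition~\ref{def:equivaraibt} pointwise in $z$. The integrand equals $\mathbb{P}_{\text{alg}}\oo{A([z_{\pi(i)}]_{i=1}^{N+1};\cdot) \in B}$. Call this $g(z_\pi)$, where $g(w) := \mathbb{P}_{\text{alg}}(A(w;\cdot)\in B)$ is a measurable function of $w$ and $z_\pi := [z_{\pi(i)}]_{i=1}^{N+1}$.

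Third, use exchangeability of $Z$, namely $Z_\pi \overset{\text{d}}{=} Z$. This gives $\expect{g(Z_\pi)} = \expect{g(Z)} = \prob{S\in B}$, so the two laws agree on every $B$. Since $\pi$ was arbitrary, $S$ is exchangeable.

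The main obstacle is the bookkeeping of permutations. I must check that the permutation convention in Definition~\ref{def:equivaraibt}, with $\pi(i)$ indexing on both sides, matches the convention in the definition of exchangeability of $S$. If the conventions differ, I will apply the argument with $\pi^{-1}$, which suffices because $\pi$ ranges over all of $S_{N+1}$.

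A second point needs care. Equality in distribution holds only for each fixed $z$, not jointly. This is why I condition on $Z$ and compare probabilities of fixed sets $B$ rather than random variables. The measurability of $g$ comes from Fubini applied to the jointly measurable $A$.
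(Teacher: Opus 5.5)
Your proof is correct and follows the same route as the paper, which chains $A(Z) \overset{\text{d}}{=} A\oo{\cc{Z_{\pi(i)}}_{i=1}^{N+1}} \overset{\text{d}}{=} \cc{\oo{A(Z)}_{\pi(i)}}_{i=1}^{N+1}$ using exchangeability of $Z$ and equivariance; your Fubini argument spells out those two steps and makes explicit the independence of $\omega$ from $Z$ that the paper uses only implicitly. The permutation-convention worry does not arise, since you use $\pi(i)$ on both sides exactly as Definition~\ref{def:equivaraibt} does.
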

        \begin{proof}
            Consider a permutation $\pi \in S_{N+1}$. From exchangeability of $Z$, and the permutation equivariance of $A$, we have that
            \begin{align*}
                A(Z) \overset{\text{d}}{=}
                A\oo{\cc{Z_{\pi(i)}}_{i=1}^{N+1}}
                \overset{\text{d}}{=}
                \cc{\oo{A(Z)}_{\pi(i)}}_{i=1}^{N+1},
            \end{align*}
            meaning that it is exchangeable.
        \end{proof}

        Therefore, we get the following corollary.
        \begin{corollary}
        \label{corollary:validpfromA}
            If $A: \mathcal{Z}^{N+1} \times \Omega \to \mathcal{S}^{N+1}$ is permutation equivariant in joint distribution, and $S = A(Z; \omega)$, then, by defining 
            \begin{align*}
                P := \frac{1}{N+1}\cc{1+\sum_{i=1}^N \mathbf{1}_{S_{N+1} \succcurlyeq S_i}},
            \end{align*}
            we will have that $P$ is a valid p-value.
        \end{corollary}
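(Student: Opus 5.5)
The plan is to chain the two earlier results. Lemma~\ref{lemma:exchange+equivariance} turns permutation equivariance of $A$ into exchangeability of $S$, and Corollary~\ref{corollary:validpexchange} turns exchangeability of $S$ into validity of a rank-based p-value. Two small bookkeeping steps connect them. First, $P$ as written is not literally the $P_1$ of Lemma~\ref{lemma:exchangepvalue}. Second, the p-value is indexed by the last coordinate rather than the first.

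\textbf{Step 1 (exchangeability of $S$).} Since $Z$ is exchangeable and drawn independently of $\omega\sim\mathbb{P}_{\text{alg}}$, and $A$ is permutation equivariant in joint distribution (Definition~\ref{def:equivaraibt}), Lemma~\ref{lemma:exchange+equivariance} applies directly. It gives, for every $\pi\in S_{N+1}$,
\begin{align*}
S=A(Z;\omega)\overset{\text{d}}{=}\cc{S_{\pi(i)}}_{i=1}^{N+1}.
\end{align*}

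The one point I would make explicit is how independence of $Z$ and $\omega$ enters the lemma's first equality, $A(Z)\overset{\text{d}}{=}A(\cc{Z_{\pi(i)}}_{i})$. Conditioning on $Z=z$, the law of $A(Z;\omega)$ given $Z=z$ is the law of $A(z;\cdot)$, because $\omega$ is independent of $Z$. Integrating this kernel against the law of $Z$, which equals the law of the permuted vector, gives the claim. The second equality then follows pointwise in $z$ from Definition~\ref{def:equivaraibt}, again integrated against the law of $Z$.

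\textbf{Step 2 (rewrite $P$).} Reflexivity of the total preorder gives $\mathbf{1}_{S_{N+1}\succcurlyeq S_{N+1}}=1$, so
\begin{align*}
P=\frac{1}{N+1}\sum_{i=1}^{N+1}\mathbf{1}_{S_{N+1}\succcurlyeq S_i}.
\end{align*}

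Now let $\tau\in S_{N+1}$ be the transposition swapping $1$ and $N+1$, and set $S':=\cc{S_{\tau(i)}}_{i=1}^{N+1}$. Then $S'$ is still exchangeable, since a permutation of an exchangeable vector is exchangeable. With $n=N+1$, the quantity $P$ is exactly $P_1$ of Lemma~\ref{lemma:exchangepvalue} computed from $S'$, because the sum over $i$ is invariant under relabeling.

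\textbf{Step 3 (conclude).} Corollary~\ref{corollary:validpexchange} applied to $S'$ gives $\prob{P\le\alpha}\le\alpha$ for all $\alpha\ge 0$, so $P$ is a valid p-value.

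I expect no real obstacle. The only delicate point is the measure-theoretic justification in Step 1, namely that independence of the algorithmic randomness from the data lets the pointwise-in-$z$ distributional equalities be integrated over $Z$. I would handle this by writing the joint law of $(Z,A(Z;\omega))$ as the law of $Z$ composed with the Markov kernel $z\mapsto\mathcal{L}(A(z;\cdot))$, whose measurability follows from the standing measurability of $A$.
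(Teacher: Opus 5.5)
Your proposal is correct and follows the paper's proof: Lemma~\ref{lemma:exchange+equivariance} gives exchangeability of $S$, and Corollary~\ref{corollary:validpexchange} then gives validity once $P$ is identified with $P_1$ by relabeling the last index as the first. Your reflexivity rewrite, the transposition $\tau$, and the kernel argument for the independence of $Z$ and $\omega$ make explicit the bookkeeping that the paper leaves implicit.
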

        \begin{proof}
            From the lemma~\ref{lemma:exchange+equivariance}, we have that $S$ is exchangeable. Note that the construction of $P$ is the same (but use the last index instead of first index) as $P_1$ in the lemma~\ref{lemma:exchangepvalue}, so by the corollary~\ref{corollary:validpexchange}, $P$ is a valid p-value.
        \end{proof}

    \subsection{Construction \& sufficient condition}

        In the last subsection, we have established that permutation equivariance will be sufficient for the validity of the rank p-value. Note that this is more general than a conformal p-value, since $A$ does not have to create non-conformity scores like in the algorithm~\ref{alg:stochp}. In this subsection, we will restrict the attention when the algorithm $A$ generates non-conformity scores under full conformal prediction framework. For any vector $u=(u_1,\ldots,u_{N+1})$, write
        $u_{-i}:=(u_1,\ldots,u_{i-1},u_{i+1},\ldots,u_{N+1})$.
        This is when 
        \begin{align*}
            A\left(z; \omega\right)
            :=
            \left[
                T_i\left(z_{-i}, z_i; \omega\right)
            \right]_{i=1}^{N+1}
        \end{align*}
        for any $z \in \mathcal{Z}^{N+1}$.

        Consider a permutation $\pi\in S_{N+1}$ and write $z^\pi:=\cc{z_{\pi(i)}}_{i=1}^{N+1}$. For each coordinate $i$, we have
        \begin{align*}
            \oo{A\left(z^\pi; \omega\right)}_i
            &= T_i\left(\oo{z^\pi}_{-i},z_{\pi(i)};\omega\right),\\
            \oo{A\left(z; \omega\right)}_{\pi(i)}
            &= T_{\pi(i)}\left(z_{-\pi(i)},z_{\pi(i)};\omega\right).
        \end{align*}
        Thus, for each coordinate, the two random vectors share the same last element, but with different ordering and index of the score function. If we want permutation equivariance in joint distribution, we then need the marginal distribution of each coordinate to be the same.
        This can be easily satisfied by enforcing permutation invariance in distribution~\ref{eqn:indsym}. However, as we have shown in the section~\ref{section:distributionisnotdufficiebt}, this is not sufficient, since this will only give permutation equivariance in marginal distributions but not joint distribution. Thus, a stronger condition is required.

        For example, if we have almost sure permutation invariance~\ref{eqn:ass}, we can rearrange the first $N$ terms and change the index from $T_i$ to $T_{\pi(i)}$ for each coordinate. This gives $A\left(\left[z_{\pi(i)}\right]_{i=1}^{N+1}; \omega\right) = \left[(A(z; \omega))_{\pi(i)}\right]_{i=1}^{N+1}$ almost surely, which is stronger than what required in permutation equivariance in joint distribution.

        Next, we will show that conditional independence and permutation invariance in distribution condition of the stochastic scoring functions $T$ ensures that the stochastic algorithm $A$ is permutation equivariant in joint distribution.
        \begin{lemma}
        [Conditional Independence \& Permutation Invariance in Distribution implies Permutation Equivariance]
        \label{lemma:CISthenequivaraint}
            If the stochastic non-conformity score functions $T$ are conditionally independent and symmetric, then the stochastic algorithm
            $A\left(z; \omega\right):=\left[T_i\left(z_{-i},z_i;\omega\right)\right]_{i=1}^{N+1}$ almost surely for all $z \in \mathcal{Z}^{N+1}$ is permutation equivariant in joint distribution.
        \end{lemma}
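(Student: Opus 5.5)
Fix $z \in \mathcal{Z}^{N+1}$ and $\pi \in S_{N+1}$, and write $z^\pi := \cc{z_{\pi(i)}}_{i=1}^{N+1}$. The plan is to condition on the variable $C$ from Definition~\ref{definition:CIS}. We will show that, for almost every value of $C$, the conditional joint laws of $A(z^\pi;\cdot)$ and $\cc{(A(z;\cdot))_{\pi(i)}}_{i=1}^{N+1}$ coincide. Integrating over the law of $C$ then gives equality of the unconditional joint laws, which is exactly Definition~\ref{def:equivaraibt}.

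Conditionally on $C$, each random vector has independent coordinates, so its conditional joint law is the product of its conditional marginals. For $A(z^\pi;\cdot)$ this is direct: apply the conditional independence clause of Definition~\ref{definition:CIS} to the data point $z^\pi$. For the permuted vector $\cc{(A(z;\cdot))_{\pi(i)}}_i$, apply the same clause to $z$ and note that reordering a conditionally independent family leaves it conditionally independent.

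It therefore suffices to match the conditional marginals coordinate by coordinate. As the paper computes just before the lemma, coordinate $i$ of the two vectors is
\begin{align*}
T_i\oo{(z^\pi)_{-i}, z_{\pi(i)};\cdot} \quad\text{and}\quad T_{\pi(i)}\oo{z_{-\pi(i)}, z_{\pi(i)};\cdot}.
\end{align*}
Both have last argument $z_{\pi(i)}$. Their first $N$ arguments are the same multiset, namely $\{z_k\}_{k\neq \pi(i)}$, listed in different orders. So there is some $\sigma\in S_N$ with $(z^\pi)_{-i} = \cc{(z_{-\pi(i)})_{\sigma(k)}}_{k=1}^N$.

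Now apply the conditional symmetry (\ref{eqn:condsym}) to the point $w := (z_{-\pi(i)}, z_{\pi(i)})$, with the index pair $(\pi(i), i)$ and permutation $\sigma$. This gives, for almost every $C$,
\begin{align*}
\oo{T_{\pi(i)}(w;\cdot)\middle\vert C} \overset{\text{d}}{=} \oo{T_i\oo{\cc{w_{\sigma(k)}}_{k=1}^N, w_{N+1};\cdot}\middle\vert C},
\end{align*}
and the right-hand side is exactly the $i$-th coordinate of $A(z^\pi;\cdot)$.

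The step needing the most care is making the conditioning rigorous. We use regular conditional distributions given $C$, which exist when $\mathcal{S}$ is a standard Borel space (for instance $\mathbb{R}$). We take the union of the finitely many null sets, one for each $i$, each $\sigma$ and each of the two points $z, z^\pi$. Off that union, the product of the conditional marginals equals the conditional joint law for both vectors. Integrating against the law of $C$ then finishes the proof.

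The proof genuinely needs conditional independence, not marginal matching alone. Marginal matching is exactly what the counterexample in Section~\ref{section:distributionisnotdufficiebt} satisfies, and it fails there. What makes the argument work is that conditional independence upgrades matching marginals to matching joint laws.
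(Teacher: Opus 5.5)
Your proposal is correct and follows the same route as the paper's proof. You factor the conditional joint law given $C$ by conditional independence, match the conditional marginals coordinate by coordinate via (\ref{eqn:condsym}), and integrate out $C$. You are more explicit than the paper about the permutation $\sigma$ and the reordered point $w$ where the symmetry is applied. Note that your finitely many null sets come from these points $w$, not only from $z$ and $z^\pi$, which is harmless.

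The only difference is technical. The paper works with conditional probabilities of one rectangle $\bigotimes_{i} B_i$ at a time and extends by Dynkin's $\pi$-$\lambda$ theorem. This avoids your use of regular conditional distributions, and so avoids needing $\mathcal{S}$ to be standard Borel.
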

        \begin{proof}
            From Dynkin's $\pi-\lambda$ theorem, it is sufficient to show the equality in probability over rectangular set. Consider an arbitrary $B := \bigotimes_{i=1}^{N+1} B_i$, where each $B_i$ is a measurable subset of $\mathcal{S}$. Write $z^\pi:=\cc{z_{\pi(i)}}_{i=1}^{N+1}$. From the definition~\ref{definition:CIS} and the definition of $A$, we have that the collection
            \begin{align*}
                \left\{
                    \oo{A\oo{z;\cdot}}_i
                \right\}_{i=1}^{N+1}
            \end{align*}
            is independent conditioned on $C$, so
            \begin{align*}
                \cprob{A\left(z^\pi\right) \in B}{C}
                &=
                \cprob{\oo{A\left(z^\pi\right)}_i \in B_i;
                \forall i}{C}\\
                &=
                \prod_{i=1}^{N+1}
                \cprob{\oo{A\left(z^\pi\right)}_i \in B_i}{C}.
            \end{align*}
            Note that, from the conditional symmetry in~\ref{eqn:condsym},
            \begin{align*}
                \left(\oo{A\left(z^\pi\right)}_i\middle\vert C\right)
                &=
                \left(
                T_i\oo{\oo{z^\pi}_{-i},z_{\pi(i)}}
                \middle\vert C\right)\\
                &\overset{\text{d}}{=}
                \left(
                T_{\pi(i)}\oo{z_{-\pi(i)},z_{\pi(i)}}
                \middle\vert C\right)\\
                &=
                \left(
                \oo{A(z)}_{\pi(i)}
                \middle\vert C\right),
            \end{align*}
            so
            \begin{align*}
                \cprob{A\left(z^\pi\right) \in B}{C}
                &=
                \prod_{i=1}^{N+1}
                \cprob{(A(z))_{\pi(i)} \in B_i}{C}\\
                &=
                \cprob{(A(z))_{\pi(i)} \in B_i;
                \forall i}{C}\\
                &=
                \cprob{\cc{(A(z))_{\pi(i)}}_{i=1}^{N+1} \in B}{C}.
            \end{align*}
            Note that all equalities here are held almost surely $C$. We then take expectation, so
            \begin{align*}
                \prob{A\left(z^\pi\right) \in B}
                =\prob{\cc{(A(z))_{\pi(i)}}_{i=1}^{N+1} \in B}.
            \end{align*}
            Thus, $A$ is permutation equivariant in joint distribution.
        \end{proof}

\section{Proof of the main theorem}
\label{appendix:proof}
    Now that we have proved other claims, we can put them together to prove the main theorem.
    \begin{proof}
         From the lemma~\ref{lemma:CISthenequivaraint}, we have that the joint score function is permutation invariant in joint distribution. From the corollary~\ref{corollary:validpfromA}, we then have that $P$ is a valid p-value.
    \end{proof}

\section{Strict relationships}

As mentioned in the figure~\ref{fig:placeholder}, the relationships
\begin{align*}
    \text{DT} \subseteq \text{AS} \cap \text{I\&D},
\end{align*}
and 
\begin{align*}
    \text{AS} \cup \text{I\&D} \subseteq \text{CI\&D} \subseteq \text{D}
\end{align*}
can be strengthened from $\subseteq$ to $\subsetneq$ for some appropriate setting $\oo{\oo{\Omega, \mathcal{F}, \mathbb{P}_{\text{alg}}}, \mathcal{Z}, \mathcal{S}}$.

Consider the case when $N = 3$, $\Omega = [0,1]^2$, $\mathcal{F}$ is the Borel $\sigma$-algebra, $\mathbb{P}_{\text{alg}}$ is the Lebesgue measure, $\mathcal{Z}$ is $\{0,1\} \times [0,1]$, and $\mathcal{S} = \mathbb{R}$.

\subsection{$\text{CI\&D} \subsetneq \text{D}$}

    Since the random space is sufficiently rich to define $2$ independent Brownian motion, the same construction in the section~\ref{section:distributionisnotdufficiebt}\footnote{The construction in the section~\ref{section:distributionisnotdufficiebt} assumes $\mathcal{X} = [0,1]$, but $\mathcal{X}$ is not used in the definition, so the result will be the same.} will provide the example of $T$ that is permutation invariance in distribution but does not yield a valid conformal p-value. From the theorem~\ref{theorem:validity}, we can then conclude that $T$ is not conditionally independence and permutation invariance in distribution. Therefore,
    \begin{align*}
        \text{CI\&D} \subsetneq \text{D}.
    \end{align*}
    
\subsection{$\text{DT} \subsetneq \text{AS} \cap \text{I\&D}$}

    We define
    \begin{align*}
        T_i(z;\omega) :=  \begin{cases}
            \omega_1 &\text{ if } x_1=x_2=0 \text{ and } x_3=1\\
            \omega_2 &\text{ if } x_1=x_2=1 \text{ and } x_3=0\\
            0 &\text{ otherwise}
        \end{cases}
    \end{align*}
    for all $i\in \{1,2,3\}$, $z \in \mathcal{Z}^3$, and $\omega \in \Omega$.

    Thus, $T$ is not deterministic, but it is almost surely permutation invariant.\footnote{Recall that we only require the permutation invariance over the first $N=2$ arguments.} 
    
    Next, we will show that this is independent and permutation invariant in distribution. As we have discussed, permutation invariant in distribution is weaker than the almost sure condition, so it is automatically satisfied. Since, among $T_1(z_2,z_3,z_1;\omega)$, $T_2(z_1,z_3,z_2;\omega)$, and $T_3(z_1,z_3,z_2;\omega)$, only at most one can be non-zero, we then get independence. Therefore,
    \begin{align*}
        \text{DT} \subsetneq \text{AS} \cap \text{I\&D}.
    \end{align*}

\end{document}